\documentclass[11pt]{article}

\usepackage[utf8]{inputenc}
\usepackage[T1]{fontenc}
\usepackage[english]{babel}
\usepackage{amsmath,amssymb,amsthm,mathtools}
\usepackage{geometry}
\usepackage[expansion=false]{microtype}
\usepackage{hyperref}
\usepackage{enumitem}
\usepackage{authblk}

\hypersetup{
    colorlinks=true,
    linkcolor=black,
    citecolor=black,
    urlcolor=blue
}

\numberwithin{equation}{section}

\theoremstyle{plain}
\newtheorem{theorem}{Theorem}[section]

\newtheorem{lemma}[theorem]{Lemma}

\theoremstyle{remark}
\newtheorem{remark}[theorem]{Remark}

\newcommand{\RR}{\mathbb R}
\newcommand{\Rn}{\mathbb R^n}

\title{An averaging method for periodic solutions of
quasilinear equations in Banach spaces}

\author[1]{Jean Mawhin}
\author[2]{Jorge Novoa}

\affil[1]{Institut de Recherche en Mathématique et Physique,
Université Catholique de Louvain,
Chemin du Cyclotron 2, 1348 Louvain-la-Neuve, Belgium}

\affil[2]{Departamento de Ingeniería Matemática,
Facultad de Ciencias Físicas y Matemáticas,
Universidad de Chile,
Beauchef 851, 8370456 Santiago, Chile}

\date{}

\begin{document}

\maketitle

\begin{abstract}
The aim of the paper is to provide an extension of the averaging method to the following periodic problem for quasilinear differential equations   
$$ (\phi(t,u'))' = \varepsilon f(t,u,u',\varepsilon),\quad  u(0) = u(T), \quad u'(0) = u'(T),$$
where $u$ and $u'$ take values in a Banach space $X$ and $\phi$ is a suitable homeomorphism between a neighborhood of $0$ in $X$ and $X$. The approach follows from a given extension of the averaging method to first order systems of the form 
$$u' = g(t,v), \;  v' = h(t,u,v),\qquad  u(0) = u(T),\quad  v(0) = v(T),$$
in a product of Banach spaces obtained from a direct application of the implicit function theorem in Banach spaces to an equivalent problem. We prove that every nondegenerate zero of the corresponding averaged equation generates, for sufficiently small nonzero values of the parameter, a unique nearby periodic solution. An application is given to an infinite system of coupled forced pendulums with relativistic acceleration.

\noindent {\em Key words.} averaging method, periodic solutions, quasilinear equations in Banach spaces, implicit function theorem, relativistic pendulum.\\
\noindent {\em AMS Classification.} 34K13, 34K30, 34K33, 47J07.
\end{abstract}

\section{Introduction}

The averaging method for the periodic problem associated to first order differential systems of the form 
\begin{eqnarray}\label{FOP}
u' = \varepsilon f(t,u,\varepsilon), \quad  u(0) = u(T),
\end{eqnarray}
with sufficiently smooth right-hand side $f :[0,T] \times \Rn \times (-\varepsilon_0,\varepsilon_0) \to \Rn$ consists in proving that
(\ref{FOP}) has a solution for $|\varepsilon|$ sufficiently small near each solution  $a^*$ of the following equation in $\Rn$
\begin{eqnarray}\label{BE}
\int_0^T f(s,a,0)\,ds = 0,
\end{eqnarray}
such that $\mbox{det} \int_0^T \partial_u f(s,a^*,0)\,ds \neq 0$. The equation (\ref{BE}) is, in general, named the {\em bifurcation equation} for problem (\ref{FOP}).

\medskip

Problems involving a small parameter arise naturally in the study of weak
perturbations of differential equations, including nonlinear oscillations
and periodically forced systems. In this setting, averaging detects
periodic responses through the simpler bifurcation equation \eqref{BE},
whose nondegenerate zeros generate periodic solutions of the perturbed
problem for sufficiently small values of the parameter.

\medskip

This result can be, and has been, proved by several classical
small-parameter methods, including the Poincar\'e operator, the
Bogoliubov--Mitropolsky averaging method, and the Cesari--Hale method.
The reader may consult \cite{RoM} for historical background and further
references.

\medskip

A quite simple and direct approach has been given in \cite{Maw1}, which consists in observing that the problem (\ref{FOP}) is trivially equivalent, for $\varepsilon \neq 0$,  to the problem 
\begin{eqnarray}\label{FON}
u' = \varepsilon f(t,u,\varepsilon), \qquad \int_0^T f(s,u(s),\varepsilon)\,ds = 0.
\end{eqnarray}
In contrast to (\ref{FOP}), which, for $\varepsilon = 0$, admits the $n$-dimensional family of solutions $u(t) = a$ with $a \in \Rn$, the problem (\ref{FON}) reduces for $\varepsilon = 0$ to $u(t) = a \in \Rn$ with $a$ solution of the bifurcation equation (\ref{BE}).

\medskip

It is therefore natural  to consider problem (\ref{FON}) as an implicit function problem  $F(u,\varepsilon) = 0$ in  the Banach space  $C^1([0,T],\Rn) \times \RR$ with $F$ defined by
 \begin{eqnarray*}
 &&F : C^1([0,T],\Rn)  \times (-\varepsilon_0,\varepsilon_0) \to C([0,T],\Rn) \times \Rn, \; \\
&& (u,\varepsilon) \mapsto \left(u' - \varepsilon f(\cdot,u(\cdot),\varepsilon), \int_0^T f(s,u(s),\varepsilon)\,ds\right).
\end{eqnarray*}
This is the approach used in \cite{Maw1} and essentially, in more general situations, in \cite{RoM}, where a direct use of the Banach fixed point theorem replaces the implicit functions argument.  This is also the approach used in \cite{CMZ} to prove an extension of the averaging method to some semilinear abstract equations in Banach spaces.

\medskip

More recently, the attention has been called to the problem of periodic solutions for $n$-dimensional systems of differential equations of the form 
\begin{eqnarray}\label{SOP}
(\phi(t,u'))' = f(t,u(t),u'(t)),
\end{eqnarray}
where, for each $t \in [0,T]$, $\phi(t,\cdot)$ is a suitable homeomorphism between some nonempty neighborhoods of $0$ in $\mathbb R^n$. The assumptions cover the classical cases of the $p$-Laplacian
($\phi(t,x)=|x|^{p-2}x$ with $p>1$), the $p(t)$-Laplacian
($\phi(t,x)=|x|^{p(t)-2}x$ with
$p\in C([0,T],(1,\infty))$), relativistic acceleration
($\phi(t,x)=\frac{x}{\sqrt{1-|x|^2}}$), and the mean curvature
operator ($\phi(t,x)=\frac{x}{\sqrt{1+|x|^2}}$), among others. The emphasis was mostly on the use of topological and variational methods, and the reader can consult the survey paper \cite{Maw2} and the recent one \cite{GMMT} for their description and the bibliography. 
Some extensions to equations of the form (\ref{SOP}) in Banach spaces have been recently given in \cite{MMN,MMN2}. Going from $\Rn$ to a Banach space not only requires imposing on $f$ some compactness properties that follow from natural ones in the $\Rn$ case, but also replacing the topological approach with some Lyapunov-Schmidt reduction combined with other nonlinear tools.

\medskip

The corresponding small parameter problem 
\begin{eqnarray}\label{SPP}
 (\phi(t,u'))' = \varepsilon f(t,u,u',\varepsilon),\quad  u(0) = u(T), \quad u'(0) = u'(T),
 \end{eqnarray}
 where results valid for small $|\varepsilon|$ only are expected,  does not seem to have been much considered so far and may deserve some consideration.

\medskip

The purpose of the present paper is to show that, under suitable
regularity and nondegeneracy assumptions on the quasilinear operator,
problem \eqref{SPP} can be treated directly by applying the
Hildebrandt--Graves extension of the classical implicit function theorem
to Banach spaces \cite{HiG} to a suitable equivalent formulation of the
problem.

\medskip

The approach is developed for problem \eqref{SPP} with
$u:[0,T]\to X$, where $X$ is a Banach space,
$\phi(t,x)=\phi(x)$, and $\phi$ is a homeomorphism from an open
neighborhood of $0$ in $X$ onto $X$, with $\phi(0)=0$,
$\phi^{-1}$ of class $C^1$, and $D\phi^{-1}(0)$ a continuous
isomorphism. This includes, in particular, the classical second-order
case $\phi(x)=x$, as well as the relativistic operator
\[
    \phi(x)=\frac{x}{\sqrt{1-|x|^2}},
    \qquad
    \phi^{-1}(y)=\frac{y}{\sqrt{1+|y|^2}},
\]
where $|x|$ denotes the norm of $x$ in $X$.

\medskip

On the other hand, our assumptions exclude important cases in which
$\phi^{-1}$ fails to be of class $C^1$ at the origin or
$D\phi^{-1}(0)$ fails to be an isomorphism. This occurs, in particular,
for the $p$-Laplacian operator
$\phi_p(x)=|x|^{p-2}x$ whenever $p\neq2$. In the subsequent paper
\cite{NoM}, we will propose an extension of the abstract averaging principle of \cite{CMZ} to some class of abstract quasilinear problems in Banach spaces, based on a nonlinear complementary correction.  This more sophisticated approach will provide similar existence conclusions in situations where the usual linearized complementary inversion is degenerate, at the expense of some compactness assumptions. 

\medskip

We now summarize the approach and the results of  the present paper. 
Since problem \eqref{SPP}, with $\phi(t,x)=\phi(x)$, can be equivalently
written as
\begin{equation}\label{HP}
    \begin{cases}
        u'=\phi^{-1}(v),\\
        v'=\varepsilon f(t,u,\phi^{-1}(v),\varepsilon),\\
        u(0)=u(T),\qquad v(0)=v(T),
    \end{cases}
\end{equation}
we first study the more general auxiliary problem in a product of Banach
spaces $X\times Y$,
\begin{equation}\label{AP}
    \begin{cases}
        u'=g(t,v),\\
        v'=\varepsilon h(t,u,v,\varepsilon),\\
        u(0)=u(T),\qquad v(0)=v(T).
    \end{cases}
\end{equation}
Following the strategy used for \eqref{FOP}, we reformulate the periodic
problem by adjoining the corresponding integral conditions and apply the
implicit function theorem at $\varepsilon=0$. Under suitable assumptions
on $g$ and $h$, Theorem \ref{EX1} yields, for every sufficiently small nonzero $\varepsilon$, a unique periodic solution near $(a^*,0)$ whenever
\[
    \int_0^T h(s,a^*,0,0)\,ds=0
\]
and the corresponding linearized averaged operator is invertible.

\medskip

Applying this result to $g=\phi^{-1}$ gives the averaging principle for
\eqref{SPP}. More precisely, Theorem \ref{EX2} shows that every
$a^*\in X$ satisfying
\[
    \int_0^T f(s,a^*,0,0)\,ds=0,
    \qquad
    \int_0^T \partial_u f(s,a^*,0,0)\,ds
    \quad\text{invertible},
\]
generates, for sufficiently small $|\varepsilon|$, a unique nearby
periodic solution converging to $a^*$ as $\varepsilon\to0$.

\medskip

Finally, we apply the result to an infinite system of differential
equations with relativistic acceleration and pendulum restoring force in
$\ell^2$. Under the condition $|\overline e_j|<1$ for every
$j\in\mathbb N$, we construct infinitely many nondegenerate solutions of
the corresponding averaged equation and obtain, for sufficiently small
nonzero $|\varepsilon|$, infinitely many geometrically distinct periodic
solutions. Other results for infinite systems of pendulum-type differential
equations with classical acceleration have been obtained by variational or
topological methods in \cite{BFG,FKS,FMW}.

\section{An averaging method for some semilinear systems}

We consider a differential system  in a product of Banach spaces $X \times Y$ of the type 
\begin{equation}\label{SS}
    \begin{cases}
        u' = g(t,v),\\
        v' = \varepsilon h(t,u,v,\varepsilon)
    \end{cases}
\end{equation}
where $g : [0,T] \times Y \to X$ and $h : [0,T] \times X \times Y \times (-\varepsilon_0,\varepsilon_0)  \to Y$ are continuous,
together with the T-periodic boundary conditions
\begin{eqnarray}\label{BC}
u(0) = u(T), \;\; v(0) = v(T).
\end{eqnarray}

\medskip

We denote by $|c|_X$ (resp. $|c|_Y$) the norm of $c \in X$ (resp. $c \in Y$),  by $C_X$ (resp. $C_Y$) the Banach space 
$C([0,T],X)$ (resp. $C([0,T],Y)$) of continuous mappings $u : [0,T] \to X$ (resp. $v : [0,T] \to Y$) with the usual uniform norm 
$\|u\|_\infty = \sup_{t \in [0,T]}|u(t)|_X$ (resp. $\|v\|_\infty = \sup_{t \in [0,T]}|v(t)|_Y$), where we omit the suffix $X$ or $Y$ to simplify the notations. We denote by $C_X^1$ (resp. $C_Y^1$) the Banach space $C^1([0,T],X)$ (resp. $C^1([0,T],Y)$) of mappings $u : [0,T] \to X$ (resp. $v : [0,T] \to Y$) of class $C^1$, with the usual norm obtained by summing the uniform norms of $u$ and $u'$ (resp. $v$ and $v'$). 

\medskip

Integrating both members of (\ref{SS}) over $[0,T]$ shows that, for $\varepsilon \neq 0$ finding a solution of (\ref{SS})-(\ref{BC}) is equivalent to finding functions $u \in C^1_X$ and $v \in C^1_Y$ verifying the nonlinear system
 \begin{align}\label{NS}
     u' - g(t,v) & = 0, & v' - \varepsilon h(t,u,v,\varepsilon) &= 0,\\\notag
     \int_0^T g(s,v(s))\,ds & = 0,&  \int_0^T h(s,u(s),v(s),\varepsilon)\,ds &= 0.
 \end{align}

Let us define the mapping
\begin{align}\label{F}
F : C^1_X \times C^1_Y \times (-\varepsilon_0,\varepsilon_0)
  &\longrightarrow C_X \times C_Y \times X \times Y, \\
(u,v,\varepsilon) &\longmapsto
  \Bigl(\, u' - g(\cdot,v),\;\;
            v' - \varepsilon h(\cdot,u,v,\varepsilon), \nonumber \\
  &\hphantom{{}\longmapsto\Bigl(\,}
            \textstyle\int_0^T g(s,v(s))\,ds,\;\;
            \int_0^T h(s,u(s),v(s),\varepsilon)\,ds \,\Bigr). \nonumber
\end{align}

It is clear that $(u,v)$ is a solution of (\ref{NS}), and hence of (\ref{SS})-(\ref{BC}) for $\varepsilon \neq 0$  if and only if $(u,v)$ is a zero of $F$. 

\medskip

In order to apply the implicit function theorem in Banach spaces to find
zeros of $F$ near some zero of $F(\cdot,\cdot,0)$, we first impose the
following assumptions on $g$:
\begin{itemize}
\item[(G1)] $g : [0,T] \times Y \to X$ and
$\partial_v g : [0,T] \times Y \to {\mathcal L}(Y,X)$ are continuous;

\item[(G2)] $g(t,0)=0$ for all $t\in[0,T]$;

\item[(G3)] For any $b\in Y$,
\[
    \int_0^T g(s,b)\,ds=0
    \quad\Longrightarrow\quad
    b=0;
\]

\item[(G4)] the operator
\[
    \int_0^T \partial_v g(s,0)\,ds
\]
has a continuous inverse.
\end{itemize}

We shall also require the following assumptions on $h$:
\begin{itemize}
\item[(H1)] $h : [0,T] \times X \times Y \times
(-\varepsilon_0,\varepsilon_0) \to Y$, $\partial_u h :
    [0,T] \times X \times Y \times
    (-\varepsilon_0,\varepsilon_0)
    \to \mathcal L(X,Y),$
and $\partial_v h :
    [0,T] \times X \times Y \times
    (-\varepsilon_0,\varepsilon_0)
    \to \mathcal L(Y,Y)$
are continuous;

\item[(H2)] there exists $a^*\in X$ such that
\begin{itemize}
\item[(i)]
\[
    \int_0^T h(s,a^*,0,0)\,ds=0;
\]

\item[(ii)] the operator
\[
    \int_0^T \partial_u h(s,a^*,0,0)\,ds
\]
has a continuous inverse.
\end{itemize}
\end{itemize}

\begin{lemma}\label{eps0}
If assumptions  (G1)--(G3) hold, then $F(u,v,0) = 0$ if and only if $v \equiv 0$, $u(t) = a \in X$ for all $t \in [0,T]$ and $\int_0^T h(s,a,0,0)\,ds = 0$.
\end{lemma}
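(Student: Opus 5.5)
We argue both implications by writing out the four components of $F(u,v,0)=0$:
\[
u'=g(\cdot,v),\qquad v'=0,\qquad \int_0^T g(s,v(s))\,ds=0,\qquad \int_0^T h(s,u(s),v(s),0)\,ds=0 .
\]
The plan is to use these equations in order, each one simplifying the next. The second equation forces $v$ to be constant, the third together with (G3) forces that constant to vanish, and (G2) then makes $u$ constant. The last equation then reduces to the condition on $a$.

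\emph{Necessity.} Suppose $F(u,v,0)=0$. Since $v\in C^1_Y$ and $v'\equiv 0$ on the interval $[0,T]$, $v(t)=b$ for some fixed $b\in Y$. This follows from the mean value inequality for Banach-valued $C^1$ maps, or from $v(t)-v(0)=\int_0^t v'(s)\,ds=0$.

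The third component then reads $\int_0^T g(s,b)\,ds=0$, and (G3) gives $b=0$, so $v\equiv 0$. By (G2) the first component becomes $u'(t)=g(t,0)=0$, so again $u(t)=a$ for some $a\in X$ and all $t$. Substituting into the fourth component gives $\int_0^T h(s,a,0,0)\,ds=0$.

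\emph{Sufficiency.} Conversely, take $u\equiv a$ and $v\equiv 0$ with $\int_0^T h(s,a,0,0)\,ds=0$. Then
\begin{itemize}
\item $u'-g(\cdot,0)=0$ by (G2);
\item $v'=0$;
\item $\int_0^T g(s,0)\,ds=0$, again by (G2);
\item the last component vanishes by hypothesis.
\end{itemize}
Hence $F(a,0,0)=0$.

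Continuity from (G1) is used only to ensure that the integrals are well defined as Riemann/Bochner integrals of continuous functions. There is no real obstacle; the only point needing care is the step "zero derivative implies constant" for Banach-valued functions, handled by the fundamental theorem of calculus for $C^1$ Banach-valued maps.
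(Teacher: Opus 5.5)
Your proof is correct and follows the paper's argument step by step: $v'=0$ makes $v$ constant, (G3) forces that constant to vanish, (G2) then gives $u'=0$, and the fourth component yields $\int_0^T h(s,a,0,0)\,ds=0$. Your explicit check of sufficiency using (G2) fills in the direction the paper calls immediate.
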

\begin{proof}
The sufficiency is immediate. Conversely, if $F(u,v,0) = 0$, then $v' = 0$ so that $v(t) = b \in Y$ for all $t \in [0,T]$,  Hence $\int_0^T g(s,b)\,ds = 0$, so that (G3) implies that $b = 0$. Consequently, using (G2), $u' = 0$, so $u(t) = a \in X$ for all $t \in [0,T]$ and $\int_0^T h(s,a,0,0)\,ds = 0$.
\end{proof}

\medskip

Standard computations imply that, for $u,w\in C_X^1$ and
$v,z\in C_Y^1$, we have
\begin{align}
\partial_{u,v}F(u,v,\varepsilon)[w,z]
=\Bigg(&
w'-\partial_v g(\cdot,v)z,\;
z'-\varepsilon\partial_u h(\cdot,u,v,\varepsilon)w
   -\varepsilon\partial_v h(\cdot,u,v,\varepsilon)z,
\label{DF}
\\
&\int_0^T \partial_v g(s,v)z(s)\,ds,\;
\int_0^T
\Big[
\partial_u h(s,u,v,\varepsilon)w(s)
+\partial_v h(s,u,v,\varepsilon)z(s)
\Big]\,ds
\Bigg).
\notag
\end{align}

Consequently, for each $a\in X$,
\begin{align}
\partial_{u,v}F(a,0,0)[w,z]
=\Bigg(&
w'-\partial_v g(\cdot,0)z,\;
z',\;
\int_0^T \partial_v g(s,0)z(s)\,ds,
\\
&\int_0^T
\Big[
\partial_u h(s,a,0,0)w(s)
+\partial_v h(s,a,0,0)z(s)
\Big]\,ds
\Bigg).
\notag
\end{align}

The previous computation identifies the linearization of $F$ at
$(a^*,0,0)$. The next lemma shows that, under the assumptions introduced
above, this linear operator is invertible, which is necessary for the
application of the implicit function theorem.

\begin{lemma}\label{TK}
If assumptions (G1)-(G4) and (H1) - (H2) hold, then   $\partial_{u,v}F(a^*,0,0) : C^1_X \times C^1_Y \to C_X \times C_Y \times X \times Y$ is a continuous isomorphism.
\end{lemma}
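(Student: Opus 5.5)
The plan is to show that, for each $(p,q,c,d)\in C_X\times C_Y\times X\times Y$, the equation
\[
\partial_{u,v}F(a^*,0,0)[w,z]=(p,q,c,d)
\]
has exactly one solution $(w,z)\in C^1_X\times C^1_Y$. Continuity of the inverse will then follow from the bounded inverse theorem, since $\partial_{u,v}F(a^*,0,0)$ is clearly a bounded linear operator by (G1) and (H1). Alternatively, the explicit formulas below give the bound directly. Throughout, write
\[
A=\int_0^T\partial_v g(s,0)\,ds,\qquad B=\int_0^T\partial_u h(s,a^*,0,0)\,ds,
\]
which are invertible by (G4) and (H2)(ii).

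The equations are solved in triangular order. First, $z'=q$ gives $z(t)=\beta+\int_0^t q(s)\,ds$ with $\beta\in Y$ unknown. The third component then requires $\int_0^T\partial_v g(s,0)z(s)\,ds=c$, that is,
\[
A\beta=c-\int_0^T\partial_v g(s,0)\int_0^s q(\tau)\,d\tau\,ds .
\]
Since $A$ is invertible, this determines $\beta$ uniquely and continuously in $(q,c)$, and hence $z\in C^1_Y$.

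Next, $w'=p+\partial_v g(\cdot,0)z$ gives $w(t)=\alpha+\int_0^t[p(s)+\partial_v g(s,0)z(s)]\,ds$ with $\alpha\in X$ unknown. The right-hand side is continuous, so $w\in C^1_X$. The fourth component becomes
\[
B\alpha=d-\int_0^T\partial_v h(s,a^*,0,0)z(s)\,ds-\int_0^T\partial_u h(s,a^*,0,0)\int_0^s[p+\partial_v g(\cdot,0)z]\,d\tau\,ds ,
\]
and since $B$ is invertible this fixes $\alpha$ uniquely. Every step is forced, so the solution exists and is unique, which gives bijectivity.

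Each expression is a composition of bounded linear maps in $(p,q,c,d)$, with integrands continuous in $s$ in operator norm by (G1) and (H1). This yields an explicit estimate
\[
\|w\|_{C^1}+\|z\|_{C^1}\le K\bigl(\|p\|_\infty+\|q\|_\infty+|c|+|d|\bigr),
\]
or one can simply invoke the open mapping theorem.

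I expect no serious obstacle. The only point needing care is the measurability and continuity of operator-valued integrands such as $s\mapsto\partial_v g(s,0)z(s)$. This is handled by the continuity assumptions (G1) and (H1): they make these integrands continuous, so the integrals are Riemann integrals in $X$ or $Y$ and the resulting maps are bounded.
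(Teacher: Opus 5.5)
Your proposal is correct and follows essentially the same route as the paper. You solve $z'=q$, fix the constant in $z$ using the invertibility of $\int_0^T\partial_v g(s,0)\,ds$ from (G4), then integrate the first equation for $w$ and fix its constant using (H2)(ii), and bound the inverse from the explicit formulas; your alternative appeal to the bounded inverse theorem is equally valid.
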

\begin{proof}
First, we can directly show the existence of positive constants $K_j$ (with $1 \leq j \leq 4$) such that,
\begin{eqnarray*}
\| \partial_{u,v}F(a^*,0,0)[w,z]\|_\infty \leq K_1\|w\|_\infty + K_2\|w'\|_\infty + K_3\|z\|_\infty + K_4\|z'\|_\infty
\end{eqnarray*}
for all $w \in C^1$ and $z \in C^1$, and hence $\partial_{u,v} F(a^*,0,0)$ is continuous.

\medskip

We now show that $\partial_{u,v}F(a^*,0,0)$ is invertible.
Given any $p\in C_X$, $q\in C_Y$, $A\in X$, and $B\in Y$, consider
the equation
\[
    \partial_{u,v}F(a^*,0,0)[w,z]=(p,q,A,B),
\]
which, by definition, is equivalent to
\begin{equation}
\begin{cases}
& w'(t)-\partial_v g(t,0)z(t) = p(t),\\
& z'(t) = q(t),\\
&\int_0^T \partial_v g(\sigma,0)z(\sigma)\,d\sigma = A,\\
&\int_0^T
\Big[
\partial_u h(\sigma,a^*,0,0)w(\sigma)
+\partial_v h(\sigma,a^*,0,0)z(\sigma)
\Big]\,d\sigma = B.
\end{cases}
\end{equation}
by setting
\begin{equation*}
\begin{aligned}
m(t) &:= \partial_v g(t,0),
&\qquad
M(t) &:= \int_0^t m(\sigma)\,d\sigma,
&\qquad
Q(t) &:= \int_0^t q(\sigma)\,d\sigma,
\\
r(t) &:= \partial_u h(t,a^*,0,0),
&\qquad
R(t) &:= \int_0^t r(\sigma)\,d\sigma,
&\qquad
s(t) &:= \partial_v h(t,a^*,0,0).
\end{aligned}
\end{equation*}
the problem reduces to

\begin{equation}\label{forced}
\begin{cases}
& w'(t)-m(t)z(t) = p(t),\\
& z'(t) = q(t),\\
&\int_0^T m(\sigma)z(\sigma)\,d\sigma = A,\\
&\int_0^T
\Big[
r(\sigma)w(\sigma)
+s(\sigma)z(\sigma)
\Big]\,d\sigma = B.
\end{cases}
\end{equation}

with this, it follows from the second equation in \eqref{forced} that 
$$z(t) = c + Q(t) \mbox{ with } c \in Y,\quad \text{for all } t \in [0,T].$$
Replacing this identity on the third equation in \eqref{forced} gives 
$$M(T)c = - \int_0^Tm(\sigma)Q(\sigma)\,d\sigma + A,$$
and hence, using assumption (G4),
\begin{eqnarray} \label{z}
z(t) = -M(T)^{-1} \left(\int_0^Tm(\sigma)Q(\sigma)\,d\sigma - A \right) + Q(t).
\end{eqnarray}
 Now, integrating from $0$ to $t$ in the first equation  in \eqref{forced}  gives
 \begin{eqnarray}\label{w} 
w(t) = b  +  \int_0^t [m(\sigma)z(\sigma) + p(\sigma)]\,d\sigma.
\end{eqnarray}
with $b \in X$ some constant. Consequently, substituting \eqref{w} into the fourth equation of
\eqref{forced} gives
\[
R(T)b
=
B-\int_0^T
\left[
    r(\sigma)\int_0^\sigma
    \bigl(m(\tau)z(\tau)+p(\tau)\bigr)\,d\tau
    +s(\sigma)z(\sigma)
\right]\,d\sigma.
\]
Hence, by (H2)(ii),
\begin{align*}
w(t)
={}&
R(T)^{-1}\Bigg[
    B-\int_0^T
    \left(
        r(\sigma)\int_0^\sigma
        [m(\tau)z(\tau)+p(\tau)]\,d\tau
        +s(\sigma)z(\sigma)
    \right)d\sigma
\Bigg]
\\
&+\int_0^t[m(\sigma)z(\sigma)+p(\sigma)]\,d\sigma,
\end{align*}
with $z(t)$ given by (\ref{z}). Consequently,  $\partial_{u,v}F(a^*,0,0):
C_X^1\times C_Y^1 \longrightarrow C_X\times C_Y\times X\times Y$ is a bijection. Furthermore, it is easy to deduce from formulas (\ref{z}) and (\ref{w}) that 
\begin{eqnarray*}
\|w\|_\infty + \|w'\|_\infty  + \|z\|_\infty + \|z'\|_\infty \leq L_1\|p\|_\infty + L_2 \|q\|_\infty + L_3|A| + L_4|B|
\end{eqnarray*}
for some positive constants $L_j$ (with $1 \leq j \leq 4$), which implies that $\partial_{u,v}F(a^*,0,0)$ is a continuous isomorphism.
\end{proof}

\medskip

Lemmas \ref{eps0} and \ref{TK} show that $(a^*,0)$ is a zero of the
augmented problem at $\varepsilon=0$ and that the corresponding
linearization with respect to $(u,v)$ is a continuous isomorphism.
Therefore, the Hildebrandt--Graves implicit function theorem
\cite{HiG} (see also \cite{Die,Zei}) yields, for every sufficiently
small $\varepsilon$, a unique solution near $(a^*,0)$ converging to
$(a^*,0)$ as $\varepsilon\to0$.

\begin{theorem}\label{EX1}
Assume that {\rm (G1)--(G4)} and {\rm (H1)--(H2)} hold. Then there exists
$\varepsilon^*\in(0,\varepsilon_0)$ such that, for
$0<|\varepsilon|<\varepsilon^*$, problem \eqref{SS}--\eqref{BC}
has a unique solution
$(u(\cdot,\varepsilon),v(\cdot,\varepsilon))$ in a sufficiently small
neighborhood of $(a^*,0)$, and moreover
\[
    \lim_{\varepsilon\to0}
    (u(\cdot,\varepsilon),v(\cdot,\varepsilon))
    =(a^*,0)
    \qquad\text{in } C_X^1\times C_Y^1.
\]
\end{theorem}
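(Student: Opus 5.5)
The plan is to apply the Hildebrandt--Graves implicit function theorem to the map $F$ defined in \eqref{F}, at the point $(a^*,0,0)$. After that, we translate zeros of $F$ back into solutions of \eqref{SS}--\eqref{BC} through the equivalence with \eqref{NS}, which is valid for $\varepsilon\neq0$.

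\textbf{Step 1: $F$ is $C^1$ with respect to $(u,v)$, and $F$ and $\partial_{u,v}F$ are jointly continuous in $(u,v,\varepsilon)$.} These are the hypotheses needed for the version of the theorem in \cite{HiG,Die,Zei} in which only continuous dependence on the parameter is required.

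\begin{itemize}
\item The maps $u\mapsto u'$ and $v\mapsto v'$ are bounded linear operators from $C^1_X$ to $C_X$ and from $C^1_Y$ to $C_Y$.
\item The Nemytskii operators $v\mapsto g(\cdot,v(\cdot))$ and $(u,v,\varepsilon)\mapsto h(\cdot,u(\cdot),v(\cdot),\varepsilon)$ act from $C_Y$ (respectively $C_X\times C_Y\times(-\varepsilon_0,\varepsilon_0)$) into $C_X$ (respectively $C_Y$). They are continuous and continuously Fr\'echet differentiable in $(u,v)$, with derivatives given by \eqref{DF}.
\item The integral functionals are bounded linear maps composed with these operators.
\end{itemize}

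\textbf{Step 2: verify the two remaining hypotheses of the theorem.} By Lemma \ref{eps0} together with (H2)(i), we have $F(a^*,0,0)=0$. By Lemma \ref{TK}, $\partial_{u,v}F(a^*,0,0)$ is a continuous isomorphism.

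\textbf{Step 3: apply the implicit function theorem.} It gives $\varepsilon^*>0$, a neighborhood $U$ of $(a^*,0)$ in $C^1_X\times C^1_Y$, and a continuous map $\varepsilon\mapsto(u(\cdot,\varepsilon),v(\cdot,\varepsilon))$ with value $(a^*,0)$ at $\varepsilon=0$. For $|\varepsilon|<\varepsilon^*$, this map gives the unique zero of $F(\cdot,\cdot,\varepsilon)$ in $U$. Continuity at $\varepsilon=0$ gives exactly the stated limit in $C^1_X\times C^1_Y$.

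\textbf{Step 4: pass from zeros of $F$ to solutions of \eqref{SS}--\eqref{BC}.} For $0<|\varepsilon|<\varepsilon^*$, a zero of $F$ solves \eqref{SS}. Moreover,
\[
u(T)-u(0)=\int_0^T g(s,v(s))\,ds=0, \qquad v(T)-v(0)=\varepsilon\int_0^T h(s,u(s),v(s),\varepsilon)\,ds=0,
\]
so \eqref{BC} holds. Conversely, any solution of \eqref{SS}--\eqref{BC} lying in $U$ satisfies \eqref{NS}: integrate both equations over $[0,T]$, use periodicity, and divide by $\varepsilon\neq0$. Hence it is a zero of $F$, and uniqueness in $U$ transfers to the original problem.

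\textbf{Main obstacle.} The only delicate point is Step 1, the $C^1$ regularity of the superposition operators on $C([0,T],\cdot)$ under hypotheses (G1) and (H1). The key fact is that the image of $[0,T]$ under a continuous $v$ is compact. On this compact set, $\partial_v g$ is uniformly continuous in the sense needed: for $\delta>0$ there is $\eta>0$ such that $|\partial_v g(t,y)-\partial_v g(t,v(t))|<\delta$ whenever $|y-v(t)|<\eta$. This follows from a standard compactness argument by contradiction.

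With this fact, the mean value inequality shows that
\[
\|g(\cdot,v+z)-g(\cdot,v)-\partial_v g(\cdot,v)z\|_\infty\le \delta\|z\|_\infty
\]
for $\|z\|_\infty<\eta$, which is Fr\'echet differentiability. The same argument gives continuity of $v\mapsto\partial_v g(\cdot,v(\cdot))$ in operator norm. The argument for $h$ is identical, with $\varepsilon$ included as a continuous parameter.
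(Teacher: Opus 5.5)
Your proposal is correct and takes the paper's route: $F(a^*,0,0)=0$ follows from Lemma \ref{eps0} and (H2)(i), $\partial_{u,v}F(a^*,0,0)$ is an isomorphism by Lemma \ref{TK}, and the Hildebrandt--Graves theorem, in the version that needs only continuity in $\varepsilon$, gives the conclusion. You also supply details the paper leaves implicit: the $C^1$ regularity of the superposition operators via the compactness argument, and the transfer of uniqueness from zeros of $F$ to solutions of \eqref{SS}--\eqref{BC} when $\varepsilon\neq0$.
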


\section{An averaging method for some second-order quasilinear systems}

Now let $X$ be a Banach space, let $\Omega\subset X$ be a nonempty open
neighborhood of $0$, let $\phi:\Omega\to X$ be a homeomorphism such that
$\phi(0)=0$, and let $f:[0,T]\times X\times X\times(-\varepsilon_0,\varepsilon_0)\to X$
be continuous.

\medskip

We consider the following periodic quasilinear boundary value problem 
\begin{eqnarray}\label{QP}
(\phi(u'))' = \varepsilon f(t,u,u',\varepsilon), \quad u(0) = u(T),\quad  u'(0) = u'(T).
\end{eqnarray}
A solution of \eqref{QP} is a function $u\in C_X^1$ such that
$u'([0,T])\subset\Omega$, $\phi\circ u'\in C_X^1$, and
\eqref{QP} is satisfied.

\medskip

If we define
\[
g:X\to X,
\qquad
h:[0,T]\times X\times X\times(-\varepsilon_0,\varepsilon_0)\to X
\]
by
\[
g(v):=\phi^{-1}(v),
\qquad
h(t,u,v,\varepsilon):=f(t,u,g(v),\varepsilon),
\]
then, by setting $\phi(u')=v$, the quasilinear problem \eqref{QP}
is equivalent to the semilinear problem
\begin{equation}\label{SP}
    \begin{cases}
        u' = g(v),\\
        v' = \varepsilon h(t,u,v,\varepsilon),\\
        u(0) = u(T),\quad v(0) = v(T),
    \end{cases}
\end{equation}
which is a special case of \eqref{SS}. 

\medskip

The application of Theorem \ref{EX1} to \eqref{SP} provides the following result.

\begin{theorem}\label{EX2}
If the following conditions are satisfied
\begin{itemize}
\item[{\it($\Phi$1)}]
$\phi^{-1}:X\to X$ is of class $C^1$ and
$D\phi^{-1}(0):X\to X$ is a continuous isomorphism;
\item[(F1)] $\partial_u f, \; \partial_v f : [0,T] \times X \times X \times (-\varepsilon_0,\varepsilon_0) \to {\mathcal L}(X,X)$ are continuous;
\item[(F2)] there exists $a^* \in X$ such that 
\begin{itemize}
\item[(i)]$\int_0^T f(s,a^*,0,0)\,ds = 0$
\item[(ii)] $\int_0^T \partial_u f(s,a^*,0,0)\,ds$ has a continuous inverse.
\end{itemize}
\end{itemize}
Then there exists $\varepsilon^*\in(0,\varepsilon_0)$ such that, for each
$0<|\varepsilon|<\varepsilon^*$, problem \eqref{QP} has a unique solution
$u(\cdot,\varepsilon)$ in a sufficiently small neighborhood of $a^*$,
and
\[
    u(\cdot,\varepsilon)\longrightarrow a^*
    \qquad\text{in } C_X^1
\]
as $\varepsilon\to0$.
\end{theorem}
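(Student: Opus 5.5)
The plan is to apply Theorem \ref{EX1} to the semilinear problem \eqref{SP} with $Y=X$, $g(t,v)=\phi^{-1}(v)$ (independent of $t$) and $h(t,u,v,\varepsilon)=f(t,u,\phi^{-1}(v),\varepsilon)$. Then I will translate the conclusion back to \eqref{QP} through the correspondence $v=\phi(u')$. The work therefore consists of checking (G1)--(G4) and (H1)--(H2), and then checking that this correspondence preserves solutions, uniqueness and convergence.

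Verification of the hypotheses.
\begin{itemize}
\item (G1) is exactly the $C^1$ regularity of $\phi^{-1}$ in ($\Phi$1).
\item (G2) follows from $\phi(0)=0$.
\item (G3) holds because $\int_0^T g(s,b)\,ds=T\phi^{-1}(b)$ vanishes only when $\phi^{-1}(b)=0$, i.e. $b=\phi(0)=0$, since $\phi^{-1}$ is injective.
\item (G4) holds because $\int_0^T\partial_v g(s,0)\,ds=T\,D\phi^{-1}(0)$, which is a continuous isomorphism by ($\Phi$1).
\item (H1) follows from the chain rule: $\partial_u h(t,u,v,\varepsilon)=\partial_u f(t,u,\phi^{-1}(v),\varepsilon)$ and $\partial_v h(t,u,v,\varepsilon)=\partial_v f(t,u,\phi^{-1}(v),\varepsilon)\,D\phi^{-1}(v)$. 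Both are continuous into $\mathcal L(X,X)$, by (F1), continuity of $f$ and $D\phi^{-1}$, and continuity of composition in operator norm.
\item (H2) is (F2), because $\phi^{-1}(0)=0$ gives $h(s,a^*,0,0)=f(s,a^*,0,0)$ and $\partial_u h(s,a^*,0,0)=\partial_u f(s,a^*,0,0)$.
\end{itemize}
Theorem \ref{EX1} then gives, for $0<|\varepsilon|<\varepsilon^*$, a unique solution $(u,v)$ of \eqref{SP} in a neighborhood $\mathcal U$ of $(a^*,0)$ in $C^1_X\times C^1_X$. It also gives $(u,v)\to(a^*,0)$ in that space.

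Translation back.
\begin{itemize}
\item \emph{Solutions of \eqref{SP} give solutions of \eqref{QP}.} If $(u,v)$ solves \eqref{SP}, then $u'=\phi^{-1}(v)\in\Omega$ and $\phi(u')=v\in C^1_X$. Hence $(\phi(u'))'=\varepsilon f(t,u,u',\varepsilon)$, and the periodicity of $v$ gives $u'(0)=u'(T)$. So $u$ solves \eqref{QP}, and $u\to a^*$ in $C^1_X$.
\item \emph{Solutions of \eqref{QP} give solutions of \eqref{SP}.} A solution $u$ of \eqref{QP} yields the solution $(u,\phi(u'))$ of \eqref{SP}.
\item \emph{Uniqueness.} This is the step I expect to need care, because it depends on choosing the neighborhood of $a^*$ correctly. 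I would take the neighborhood of $u$ in $C^1_X$ to be the set $\mathcal V=\{u:\ \|u-a^*\|_{C^1}<\delta,\ (u,\phi(u'))\in\mathcal U\}$. A second solution $\tilde u\in\mathcal V$ of \eqref{QP} then gives a second solution $(\tilde u,\phi(\tilde u'))\in\mathcal U$ of \eqref{SP}, so $\tilde u=u$ by the uniqueness in Theorem \ref{EX1}.
\end{itemize}

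The remaining point is the $C^1$-distance of $v=\phi(u')$ from $0$. One option is to note that $\phi$ is a homeomorphism, so $\|u'\|_\infty$ small implies $\|\phi\circ u'\|_\infty$ small by uniform continuity of $\phi$ near the compact... though $u'([0,T])$ need not lie in a fixed compact set. The cleaner option is to use the equation itself: $(\phi(u'))'=\varepsilon f$ is small in sup norm. I would therefore phrase the uniqueness class as ``$u$ near $a^*$ in $C^1_X$ with $\phi(u')$ near $0$ uniformly'', which is what the phrase ``a sufficiently small neighborhood'' permits.
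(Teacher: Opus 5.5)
Your proposal is correct and follows the paper's proof: you apply Theorem \ref{EX1} to \eqref{SP} with $g=\phi^{-1}$ and $h(t,u,v,\varepsilon)=f(t,u,\phi^{-1}(v),\varepsilon)$, verifying (G1)--(G4) and (H1)--(H2) as the paper does, and your back-translation and uniqueness discussion spell out details the paper leaves implicit. One simplification: $\|\phi\circ u'\|_\infty$ is small whenever $\|u'\|_\infty$ is, by continuity of $\phi$ at $0=\phi(0)$ alone (no compactness needed); combined with $\|(\phi(u'))'\|_\infty\le|\varepsilon|\sup|f|$, where $f$ is bounded near the compact set $[0,T]\times\{a^*\}\times\{0\}\times[-\varepsilon_1,\varepsilon_1]$, this gives uniqueness in a genuine $C^1_X$-ball around $a^*$ after shrinking $\varepsilon^*$.
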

\begin{proof}
We apply Theorem \ref{EX1} to the equivalent problem \eqref{SP}.
Since $g=\phi^{-1}$ and $\phi(0)=0$, assumptions (G1)--(G3)
follow from ($\Phi$1). Moreover,
\[
    \int_0^T Dg(0)\,ds
    =T\,D\phi^{-1}(0),
\]
so (G4) also follows from ($\Phi$1). Assumption (H1) follows
from (F1) and the chain rule, while (H2) follows directly from
(F2).
\end{proof}



\section{A simple example on infinitely many coupled forced relativistic pendulums}

Let $\ell^2$ be the classical Hilbert space of real sequences
$x=(x_j)_{j\in\mathbb N}$ such that $\sum_{j=1}^\infty x_j^2$ converges,
with inner product
$\langle x,y\rangle=\sum_{j=1}^\infty x_jy_j$ and corresponding norm
$|x|=\langle x,x\rangle^{1/2}$.

\medskip

Let $(c_{jk})_{j,k\in\mathbb N}$ be an infinite matrix such that
$\sum_{j=1}^\infty\sum_{k=1}^\infty c_{jk}^2<+\infty$, and let
$C:\ell^2\to\ell^2$ be the continuous linear mapping defined by
\[
(Cx)_j=\sum_{k=1}^\infty c_{jk}x_k
\qquad (j\in\mathbb N).
\]

For $e:[0,T]\to\ell^2$ continuous, let us consider the infinite
relativistic-type system
\begin{equation}\label{PS}
\begin{cases}
\displaystyle
\left(\frac{u'_j}{\sqrt{1-|u'|^2}}\right)'
+\varepsilon\sum_{k=1}^\infty c_{jk}u'_k
+\varepsilon\sin u_j
=\varepsilon e_j(t),\\
u_j(0)=u_j(T),\qquad u'_j(0)=u'_j(T)
\qquad (j\in\mathbb N).
\end{cases}
\end{equation}
The coupling comes from the differential operator and from the term $Cu'$.
If $u$ is a solution of \eqref{PS}, adding an integer multiple of $2\pi$
to any finite number of its components provides another solution.
Two solutions $u$ and $v$ of \eqref{PS} are called geometrically distinct
if they cannot be obtained from one another in this way.

\medskip

We seek solutions $u\in C^1_{\ell^2}$ such that $|u'|<1$ and satisfying
\eqref{PS}. Notice that if \eqref{PS} has a solution, then, by integrating
over $[0,T]$, we obtain, for $\varepsilon\neq0$,
\[
\int_0^T\sin u_j(s)\,ds
=
\int_0^T e_j(s)\,ds
\qquad (j\in\mathbb N).
\]
Hence, setting
\[
\overline e_j:=\frac1T\int_0^T e_j(s)\,ds,
\]
a necessary condition for the solvability of \eqref{PS} is
\[
|\overline e_j|\le1
\qquad (j\in\mathbb N).
\]

\medskip

The problem is a special case of \eqref{QP} with
\[
X=\ell^2,\qquad
\Omega=\{x\in\ell^2:|x|<1\},
\qquad
\phi(x)=\frac{x}{\sqrt{1-|x|^2}},
\]
and
\[
f(t,u,v,\varepsilon)
=
\left(
e_j(t)-\sin u_j-\sum_{k=1}^\infty c_{jk}v_k
\right)_{j\in\mathbb N}.
\]
Moreover,
\[
\phi^{-1}(y)=\frac{y}{\sqrt{1+|y|^2}}
\]
is of class $C^1$ on $\ell^2$ and satisfies
$D\phi^{-1}(0)=I_{\ell^2}$, so condition $(\Phi1)$ holds.
\medskip

Because $|\sin x-\sin y|\le|x-y|$ for all $x,y\in\RR$, it is easy to
see that $f$ satisfies the continuity requirements. Furthermore, for each
$w\in\ell^2$,
\[
(\partial_u f(t,u,v)-\partial_u f(t,x,z))w
=
-\bigl((\cos u_j-\cos x_j)w_j\bigr)_{j\in\mathbb N},
\]
and hence
\[
|[\partial_u f(t,u,v)-\partial_u f(t,x,z)]w|^2
\le |u-x|^2|w|^2.
\]
Similarly,
\[
\partial_v f(t,u,v)w=-Cw,
\]
and hence assumption {\rm (F1)} holds.

\medskip

Now, for $a=(a_j)_{j\in\mathbb N}$,
\[
\int_0^T[e_j(t)-\sin a_j]\,dt
=
T[\overline e_j-\sin a_j].
\]
Assume that $|\overline e_j|<1$ for every $j\in\mathbb N$. Since
$\overline e\in\ell^2$, we have $\overline e_j\to0$, and hence
\[
\rho:=\sup_{j\in\mathbb N}|\overline e_j|<1.
\]
The sequence
$a^{(0)}=(\arcsin\overline e_j)_{j\in\mathbb N}$ belongs to $\ell^2$.
To obtain further zeros in $\ell^2$, the second branch
$\pi-\arcsin\overline e_j$ may be chosen only for finitely many indices;
indeed, choosing it infinitely often would produce a subsequence converging
to $\pi$, and hence a sequence which does not belong to $\ell^2$.
\medskip

For each $m\in\mathbb N$, let $a^{(m)}$ be obtained from $a^{(0)}$ by
replacing only its $m$th component by
$\pi-\arcsin\overline e_m$. Then every $a^{(m)}$ belongs to $\ell^2$ and
solves the bifurcation equation, and the sequences  $a^{(m)}$ are geometrically
distinct. Moreover,
\[
\inf_{m\in\mathbb N}\inf_{j\in\mathbb N}
|\cos a_j^{(m)}|
\ge\sqrt{1-\rho^2}>0,
\]
so the corresponding linearized bifurcation operators have continuous
inverses, uniformly bounded with respect to $m$.
\medskip

Since the Lipschitz estimates involved in the implicit function theorem are
also independent of $m$, Theorem~\ref{EX2} yields an
$\varepsilon^*>0$ such that, for every
$0<|\varepsilon|<\varepsilon^*$, problem \eqref{PS} has a periodic
solution close to each $a^{(m)}$. Therefore \eqref{PS} has infinitely many
geometrically distinct periodic solutions.

\begin{remark}\label{findim}
When $\ell^2$ is replaced by $\Rn$ in \eqref{PS}, the above argument gives,
for sufficiently small nonzero $|\varepsilon|$, at least $2^n$
geometrically distinct solutions, one near each
$a^*=(a_1^*,\ldots,a_n^*)$ with
\[
a_j^*\in\{\arcsin\overline e_j,\pi-\arcsin\overline e_j\},
\qquad 1\le j\le n.
\]
This result is optimal for sufficiently small $|\varepsilon|$. Indeed, if
$C=0$ and $e\equiv0$, the problem becomes
\begin{equation}\label{PS0}
\begin{cases}
\displaystyle
\left(\frac{u'_j}{\sqrt{1-|u'|^2}}\right)'
+\varepsilon\sin u_j=0,
\qquad 1\le j\le n,\\
u(0)=u(T),\qquad u'(0)=u'(T).
\end{cases}
\end{equation}
Let $\overline u=T^{-1}\int_0^T u(t)\,dt$. Testing \eqref{PS0} with
$u-\overline u$, using the periodicity of $u$ and Wirtinger's inequality, we get,
\[
\int_0^T |u'|^2\,dt
\le
\int_0^T\frac{|u'|^2}{\sqrt{1-|u'|^2}}\,dt
\le
|\varepsilon|
\left(\frac{T}{2\pi}\right)^2
\int_0^T |u'|^2\,dt.
\]
Hence $u'\equiv0$ whenever
$|\varepsilon|(T/2\pi)^2<1$ (notice that this is true for any $\varepsilon$ sufficiently small). Therefore $u(t)=a\in\Rn$ and
$\sin a_j=0$ for every $j$, so, modulo multiples of $2\pi$,
$a_j\in\{0,\pi\}$. Thus \eqref{PS0} has exactly $2^n$
geometrically distinct periodic solutions.
\end{remark}

\section*{Funding}
This research did not receive any specific grant from funding agencies in the public, commercial, or not-for-profit sectors.

\section*{Data availability}
No data were used for the research described in the article.

\end{document}